\pdfoutput=1  

\documentclass{amsart}
\usepackage{amssymb}
\usepackage{amsmath}

\usepackage{graphicx}

\usepackage{url}

\newtheorem{theorem}{Theorem}[section]
\newtheorem{lemma}[theorem]{Lemma}

\newtheorem{question}[theorem]{Question}

\theoremstyle{definition}

\theoremstyle{remark}
\newtheorem{remark}[theorem]{Remark}

\numberwithin{equation}{section}

\begin{document}

\title[L-space knots with braid index four and tunnel number two]{On hyperbolic L--space knots with braid index four and tunnel number two}


\author[M. Teragaito]{Masakazu Teragaito}
\address{Department of Mathematics Education, Hiroshima University,
1-1-1 Kagamiyama, Higashi-hiroshima 7398524, Japan.}
\email{teragai@hiroshima-u.ac.jp}
\thanks{
The author has been partially supported by JSPS KAKENHI Grant Number JP25K07004.}

\subjclass[2020]{Primary 57K10}

\date{\today}



\begin{abstract}
There are only three known strongly invertible hyperbolic L--space knots with
braid index four and tunnel number two.
They are \texttt{t09284}, \texttt{t10496} and \texttt{o9\_34409} in the SnapPy census.
In this paper, we give the first infinite family of strongly invertible hyperbolic
L--space knots with braid index four and tunnel number two that
includes \texttt{t10496} and \texttt{o9\_34409}.
\end{abstract}

\keywords{L--space knot, braid index, tunnel number}
\maketitle


\section{Introduction}\label{sec:intro}

A knot  in the $3$-sphere $S^3$ is called an \textit{L--space knot\/} if it admits a positive Dehn surgery yielding an L--space.
A typical example is a (positive) torus knot.
In this paper, we focus on the braid index of L--space knots.
Knots with braid index two are torus knots.
It is known that L--space knots  with braid index three
are either torus knots or braid positive twisted torus knots \cite{LV}.
(Here, a knot that is the closure of a positive braid is said to be braid positive.)
Also,  L--space knots with braid index less than four have tunnel number one.

In the SnapPy census (\cite{ABG, BK, D1,D2}), there are 632 L--space knots, and there are only three known strongly invertible
hyperbolic L--space knots with braid index four and tunnel number two; 
\texttt{t09284}, \texttt{t10496} and \texttt{o9\_34409} (\cite{ABG}).
This observation leads to the following question.

\begin{question}[{\cite[Question 10]{ABG}}]
What are the L--space knots of braid index four with tunnel number greater than one?
\end{question}

In a previous paper \cite{BT},  we gave an infinite family of asymmetric hyperbolic L--space knots with braid index four.
We can verify that these knots have tunnel number two.
(It is not hard to give an unknotting tunnel system consisting of  two arcs.
Then the asymmetry excludes the possibility of tunnel number one, because a tunnel number one knot is strongly invertible \cite[Lemma 5]{Mor}.)

The purpose of this paper is to give the first infinite family of strongly invertible hyperbolic L--space knots with
braid index four and tunnel number two.

\begin{theorem}\label{thm:main}
There are infinitely many strongly invertible, hyperbolic L--space knots
with braid index four and tunnel number two.
\end{theorem}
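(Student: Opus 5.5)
We will construct an explicit infinite family $\{K_n\}_{n\ge 1}$ as closures of $4$-braids, for example
\[
\beta_n=(\sigma_1\sigma_2\sigma_3)^{k}\,\sigma_2^{2n}\,w ,
\]
with $w$ a fixed short positive word chosen so that two small members reproduce \texttt{t10496} and \texttt{o9\_34409}. Every $K_n$ is a positive braid closure, so it is fibered and strongly quasipositive. We then verify the five properties in turn.

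\emph{L--space knot.} We realize $K_n$ as a knot lying on a genus-two Heegaard surface, or equivalently as an arc-twisted family. We then exhibit a positive surgery slope $p_n$ whose surgery is a Seifert fibered space over $S^2$ with three exceptional fibers, or a graph manifold. This identification can be made by a Montesinos trick: the knot is strongly invertible, so the surgery is the double branched cover of a link obtained by a rational tangle replacement, and we can arrange that link to be a Montesinos link or an alternating link. An L--space is then certified by the Lisca--Stipsicz/Jankins--Neumann criterion in the Seifert case, or by quasi-alternating-ness of the branch link. As an independent check, the Alexander polynomial must have coefficients $\pm1$ alternating in sign.

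\emph{Strong invertibility.} We draw $K_n$ in a symmetric position whose axis meets $K_n$ in two points. The same picture feeds the Montesinos trick above.

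\emph{Braid index four.} The upper bound is the braid presentation itself. For the lower bound, the Morton--Franks--Williams inequality applied to the HOMFLY polynomial (or to a computable Alexander-type invariant) shows the index is at least $4$. Alternatively, since $K_n$ is a positive braid closure with a full twist on $4$ strands when $k\ge4$, the Franks--Williams theorem directly gives braid index $4$. We plan to arrange $k=4$ so that this applies cleanly.

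\emph{Hyperbolicity.} The knots $K_n$ are obtained by $-1/n$ surgery on an unknotted circle $c$ in the complement of $K_1$, where $c$ encircles the $\sigma_2^{2}$ twist region. We verify with SnapPy, certified by verified computation, that $S^3\setminus(K_1\cup c)$ is hyperbolic. Thurston's hyperbolic Dehn surgery theorem then gives hyperbolicity for all but finitely many $n$, and discarding finitely many still leaves infinitely many knots.

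\emph{Tunnel number two.} This is the main obstacle. The upper bound comes from drawing two explicit unknotting arcs that persist through the twisting. For the lower bound, tunnel number one knots have strongly invertible complements, so the symmetry argument used for the asymmetric family of \cite{BT} is unavailable. We would try the following. A tunnel number one knot has a genus-two Heegaard splitting of its exterior, so its knot group is two-generated. We would therefore show that $\pi_1(S^3\setminus K_n)$ has rank $3$, for instance by finding a surjection onto a finite group, or onto a quotient of a Dehn filling, that needs three generators, uniformly in $n$.

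If that fails, we would use the fact that tunnel number one L--space knots are $(1,1)$-knots or otherwise constrained. Alternatively, we would use the Alexander module: for a tunnel number one knot the Alexander module is cyclic. So it suffices to exhibit a non-cyclic Alexander module or a non-cyclic torsion in $H_1$ of the $r$-fold branched cover for some fixed $r$. Showing that this is preserved in the twist family, via a uniform Fox-calculus computation depending polynomially on $n$, is the step I expect to require the most work.
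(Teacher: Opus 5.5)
\paragraph{Main gap: no lower bound on tunnel number.} The decisive step is missing. You never prove tunnel number $\ge 2$, and none of your three routes gives it.

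Before that, note that no family is actually fixed: $w$ is left open. The hope that \texttt{t10496} and \texttt{o9\_34409} lie in a $\sigma_2^{2n}$-twist family is also unsupported. In the paper they are $K_3,K_4$ among the closures of $[3,2,2,1,3,2,2,3,2,(1,2)^{3n}]$. They differ by a full twist on three strands, i.e.\ by twisting along a circle $c$ with $\mathrm{lk}(K_0,c)=3$.

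Now the three routes:
\begin{itemize}
\item You exhibit no rank-$3$ finite quotient, and give no reason one exists uniformly in $n$.
\item ``Tunnel number one L--space knots are $(1,1)$-knots'' is not an established theorem, and you do not say how it would yield a bound.
\item The Alexander-module obstruction is valid in module form: a $2$-generator, $1$-relator presentation forces the Alexander module to be $\mathbb{Z}[t^{\pm1}]/(\Delta)$. Your branched-cover version, however, fails for $r\ge 3$ if ``non-cyclic'' refers to abelian groups. The trefoil has tunnel number one, yet its $3$-fold branched cover has $H_1\cong(\mathbb{Z}/2)^2$. Only $r=2$, or non-cyclicity over $\mathbb{Z}[t^{\pm1}]$, is an obstruction. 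Nothing suggests the module of a hyperbolic twist family like this is non-cyclic.
\end{itemize}

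\paragraph{The missing idea.} The idea is geometric, and you already hold its ingredients. Use the strong inversion and the Montesinos trick to produce a surgery that is too complicated for a tunnel number one knot, not to certify an L--space.

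In the paper, $(9,-1/n)$-surgery on $K_0\cup c$, i.e.\ $(9n+9)$-surgery on $K_n$, is the double branched cover of a link built from the Montesinos tangles $[-1/3,1/3]$ and $[-1/2,-1/(n-1)]$. This cover is a graph manifold made of two Seifert pieces over the disk, with exceptional fibers $(3,3)$ and $(2,n-1)$, whose regular fibers meet twice on the JSJ torus. Kobayashi's classification of genus-two manifolds containing essential tori then gives Heegaard genus $\ge 3$ (for $n\ge5$; $n=3,4$ are already known). Every surgery on a tunnel number one knot has Heegaard genus $\le 2$, so $K_n$ has tunnel number two.

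Your L--space step also lacks any mechanism that is uniform in $n$. The paper gets one from the Baker--Motegi criterion. Here $K_0=T(3,4)$ is an L--space knot, $\mathrm{lk}(K_0,c)=3>1$, and $(8,0)$-surgery on $K_0\cup c$ is $L(9,7)$ with $8\ge 2g(K_0)-1$. Hence every $K_n$ with $n\ge 0$ is an L--space knot.

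\paragraph{What does work.} Two of your steps are legitimate alternatives to the paper:
\begin{itemize}
\item Hyperbolicity via Thurston's Dehn surgery theorem on a verified hyperbolic $K\cup c$ gives all but finitely many $n$, which suffices. The paper instead uses the formal-semigroup argument.
\item Braid index via Franks--Williams for positive braids containing $\Delta^2$ works. The paper instead deduces braid index four from tunnel number two.
\end{itemize}
Both become proofs only once a concrete family is fixed and the tunnel number bound is actually established.
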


For an integer $n\ge 0$, let $K_n$ be the closure of the positive $4$-braid
\begin{equation*}
[3,2,2,1,3,2,2,3,2, (1,2)^{3n}],
\end{equation*}
where an integer $i$ denotes the standard braid generator $\sigma_i$ in the braid group $B_4$ of four strands.
Since it is the closure of a positive braid, it is fibered (\cite[Theorem 2]{S}), and it has genus $3n+3$.
Hence $K_m\ne K_n$ if $m\ne n$.
Note that $K_0$ is  the torus knot $T(3,4)$, and  $K_1$ is the $(2,9)$-cable of $T(2,3)$.
Also, $K_2$, $K_3$ and $K_4$ are \texttt{s682}, \texttt{t10496} and \texttt{o9\_34409}, respectively, in the census.

Theorem \ref{thm:main} immediately follows from the following.

\begin{theorem}\label{thm:main2}
Let $K_n$ be the knots defined as above.
Then $K_n$ satisfies the following.
\begin{itemize}
\item[(1)]
$K_n$ is a strongly invertible L--space knot.
\item[(2)]
If $n\ge 1$, then $K_n$ has braid index four.
\item[(3)]
If $n\ge 2$, then $K_n$ is hyperbolic.
\item[(4)]
If $n\ge 3$, then $K_n$ has tunnel number two.
\end{itemize}
\end{theorem}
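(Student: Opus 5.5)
We plan to prove the four items separately, using four different tools.

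For (1), the natural route is to exhibit $K_n$ as a knot admitting a positive L--space surgery through a surgery description coming from the braid. The factor $(1,2)^{3n}=(\sigma_1\sigma_2)^{3n}=\Delta_3^{2n}$ is $n$ full twists on the first three strands. So $K_n$ is obtained from the link $K_0\cup C$ by $-1/n$ surgery on an unknot $C$ encircling three strands of $T(3,4)$.

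We will look for a strongly invertible Montesinos-type description. The symmetry comes from drawing the braid so that an involution of $S^3$ with axis meeting $K_n$ in two points preserves both $K_0\cup C$ and the twisting. We check that conjugating and flipping the word $[3,2,2,1,3,2,2,3,2]$ reverses it up to braid-closure moves and commutes with $\Delta_3^2$. Taking the quotient under this involution (Montesinos trick) turns surgery on $K_n$ into double branched covers of links. We then show that some slope, presumably $2g(K_n)-1=6n+5$ or a nearby integer, gives the double branched cover of a quasi-alternating link, hence an L--space. We would run this as an induction in $n$ on the tangle diagram via the unoriented skein exact triangle: adding one full twist adds a rational tangle, and the determinants add. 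Alternatively, if the upstairs picture is simpler, we show directly that $K_n$ is a twisted torus knot or a cable-in-twist family for which L--space surgeries are known, though $n\ge2$ is hyperbolic, so the Montesinos route seems more likely.

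For (2), the Morton--Franks--Williams inequality will be used. Since $K_n$ is the closure of a positive 4-braid of length $w=9+6n$, the HOMFLY $v$-span lower bound gives braid index $\ge 4$ once we check that the coefficient of the extremal $v$-degree $v^{w-3}$ (i.e. $v$-degree span $6$) is nonzero. We would compute this via the skein relation inductively in $n$, or via Franks--Williams's result for positive braids containing a full twist. Here $(1,2)^{3n}$ gives only a 3-strand full twist, so instead we use the criterion that a positive braid on $b$ strands whose closure has $v$-span $2(b-1)$ realizes the braid index; this can be verified by the inductive HOMFLY recursion.

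For (3), we use the surgery description $K_n=K_0$ after $-1/n$ surgery on $C$. We verify (for instance with SnapPy and a rigorous certificate such as verified hyperbolicity) that the complement of $K_0\cup C$ is hyperbolic. Thurston's hyperbolic Dehn surgery, with the 6-theorem to make it effective, then shows that $-1/n$ filling is hyperbolic for all but finitely many $n$. The remaining small cases ($n=2,3,4,\dots$ up to the bound) are the census knots \texttt{s682}, \texttt{t10496}, \texttt{o9\_34409} and a few more, checked directly.

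For (4), the upper bound of two comes from an explicit pair of arcs. For the lower bound we must show $K_n$ is not tunnel number one, which we expect to be the main obstacle. Our plan is to use the Alexander polynomial, computed from the fiber: tunnel number one L--space knots are exactly the Berge-type or classified families (Greene, Baker--Moore style results on tunnel number one L--space knots being 1-bridge braids, up to a conjecture). More robustly, we would use the fact that tunnel number one knots have a two-generator one-relator knot group. We would show that the rank of $\pi_1$ is at least three for $n\ge3$, for instance by a Heegaard genus argument via the JSJ/hyperbolic filling. We would first try to verify that $K_n$ is not a 1-bridge braid by comparing its Alexander polynomial (gap structure from the L--space condition) with the explicit list of 1-bridge braid polynomials, using the known theorem that tunnel number one L--space knots in this braid-index range are 1-bridge braids.
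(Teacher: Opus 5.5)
\textbf{The main gap is the lower bound in (4).} You lean on the claim that tunnel number one L--space knots (in this braid-index range) are $1$-bridge braids. That is not a theorem. It is at best a conjecture, and no classification of tunnel number one L--space knots of braid index four exists, so comparing $\Delta_{K_n}(t)$ with $1$-bridge braid polynomials proves nothing. Your fallback, showing $\mathrm{rank}\,\pi_1\ge 3$ ``by a Heegaard genus argument,'' mixes up two invariants: rank is bounded above by Heegaard genus, so a genus lower bound gives no rank lower bound. It also names neither a filling nor a way to estimate its genus.

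The missing idea is this. A genus two Heegaard splitting of the exterior of a tunnel number one knot induces a splitting of genus at most two of every Dehn filling. So it suffices to find one surgery on $K_n$ of Heegaard genus at least three. The paper takes $(9,-1/n)$-surgery on $K_0\cup c$, i.e.\ $(9n+9)$-surgery on $K_n$, and applies the Montesinos trick to the strongly invertible position of $K_0\cup c$. The quotient link is a union of the Montesinos tangles $[-1/3,1/3]$ and $[-1/2,-1/(n-1)]$. The surgered manifold is therefore a graph manifold built from two Seifert fibered spaces over the disk, with exceptional fibers of indices $(3,3)$ and $(2,n-1)$, whose regular fibers meet twice on the common torus. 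Kobayashi's classification of toroidal genus two manifolds then excludes genus two, with $n=3,4$ treated separately. You set up the Montesinos trick in (1) but never aim it at a surgery whose Heegaard genus you can control.

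\textbf{Items (1) and (2) are left as unverified plans.}

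For the L--space half of (1), you never exhibit an L--space surgery. ``The determinants add'' is not an argument until you specify, for every $n$, a slope at least $6n+5$ and a quotient link, and prove that link quasi-alternating. The paper needs no per-$n$ work. The single $(8,0)$-surgery on $K_0\cup c$ is the lens space $L(9,7)$. Since $8\ge 2g(K_0)-1=5$ and $\mathrm{lk}(K_0,c)=3>1$, the Baker--Motegi criterion (Lemma~\ref{lem:Lspace-interval}) makes every $K_n$, $n\ge 0$, an L--space knot.

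For the symmetry in (1), a word-reversal check only certifies invertibility. What you need, and what the paper draws, is an explicit involution of $S^3$ preserving $K_0$ and $c$ whose axis meets $K_0$ in two points.

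For (2), nonvanishing of the extremal HOMFLY-PT coefficient for all $n$ is the entire content. Since the Franks--Williams full-twist criterion does not apply, asserting it ``by the inductive recursion'' leaves (2) open. The paper uses the Morton--Franks--Williams bound only for $n=2$, and the bridge index for the cable $K_1$. For $n\ge 3$ it derives braid index four from (4): by Lee--Vafaee, an L--space knot of braid index at most three is a torus knot or a twisted torus knot, hence has tunnel number one.

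\textbf{Item (3) is a legitimate alternative.} Your plan is certified hyperbolicity of the exterior of $K_0\cup c$, the $6$-theorem for all large $n$, and rigorous checks of the finitely many remaining cases. This is a sound computer-assisted route. The paper instead shows the formal semigroup of $K_n$ is not closed under addition for $n\ge 2$. That rules out torus knots directly. It rules out satellites via primeness, Schubert's bridge-number bound, and the structure theory of satellite L--space knots, which would force $K_n$ to be an iterated torus knot with a genuine semigroup. The paper's route is uniform in $n$ and computer-free; yours requires an explicit cutoff and certificates for the small cases.
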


We have two other candidates of infinite families which generalize \texttt{t09284}.
They are the closures of $4$-braids
\[
[(1,2,3)^4,2,1,3,2,2,3^{2m}], \quad 
[(1,2,3)^{4n}, 2,1,3,2,2,3^6].
\]
When $m=3$ or $n=1$, each gives \texttt{t09284}.
However, we could not prove that these knots have tunnel number two.

\section{L--space knots}\label{sec:Lspace}

In this section, we prove that our knot $K_n$ is a strongly invertible L--space knot.
Let $K_0\cup c$ be the link as shown in Figure \ref{fig:str}.
By performing $(-1/n)$-surgery along $c$, $K_0$ will be changed into $K_n$.

\begin{figure}[ht]
\includegraphics*[width=0.5 \textwidth]{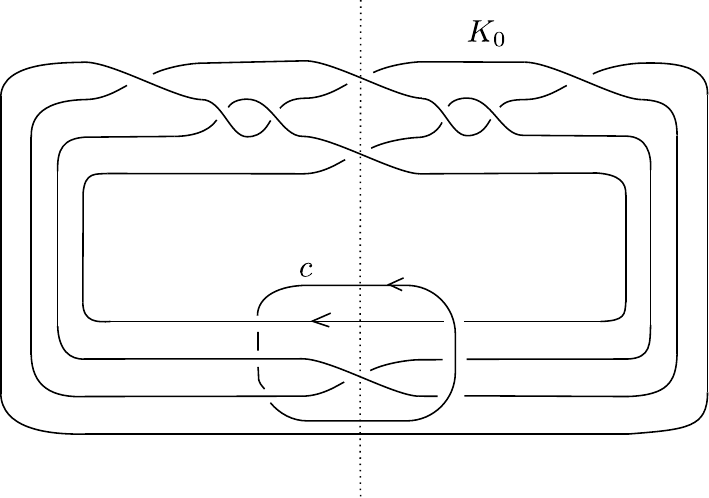}
\caption{The link $K_0\cup c$.  By performing $(-1/n)$-surgery along $c$, 
$K_0$ will be changed into $K_n$.  This diagram  also shows a strongly invertible position of the link.}
\label{fig:str}
\end{figure}
 
\begin{lemma}\label{lem:strong}
For any $n\ge 0$, $K_n$ is strongly invertible.
\end{lemma}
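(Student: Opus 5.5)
The plan is to use the symmetric picture of Figure~\ref{fig:str}. I will exhibit an involution $h$ of $S^3$, namely the rotation by $\pi$ about the axis drawn in that figure. I will check that $h$ preserves both $K_0$ and $c$ setwise, and that the axis $\mathrm{Fix}(h)$ meets $K_0$ in exactly two points and $c$ in exactly two points. So $K_0\cup c$ is a strongly invertible link, with each component strongly invertible under the same involution. This is a direct inspection of the diagram. The only care needed is to confirm that the crossings of the diagram are carried to crossings of the same type by the rotation, which holds because a $\pi$-rotation about an axis in the projection plane preserves crossing signs.

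Next, I will transfer the symmetry through the surgery. Let $N(c)$ be an $h$-invariant tubular neighborhood of $c$, and let $E=S^3-\mathrm{int}\,N(c)$. Then $h|_E$ is an involution of $E$ preserving $K_0$. On the boundary torus $\partial N(c)$, $h$ acts as the hyperelliptic involution. This holds because the axis meets $c$ in two points, hence meets $\partial N(c)$ in four points. The hyperelliptic involution sends every slope to itself, reversing orientation, so it maps the meridian disk of the $(-1/n)$-surgery solid torus to itself up to isotopy.

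The $\pi$-rotation of the solid torus $V$ about a diameter of each meridian disk is an involution of $V$ that restricts on $\partial V$ to the hyperelliptic involution. Its fixed set is two arcs. I will isotope $h|_{\partial N(c)}$ to agree with this extension, which is possible since the hyperelliptic involution is unique up to conjugacy and isotopy. Then $h$ extends to an involution $h_n$ of $E\cup_{-1/n} V$. This surgered manifold is again $S^3$, since $c$ is unknotted and the surgery is $-1/n$. The image of $K_0$ in it is $K_n$, $h_n$ preserves $K_n$, and $\mathrm{Fix}(h_n)\cap K_n=\mathrm{Fix}(h)\cap K_0$ consists of two points.

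Finally, I need $\mathrm{Fix}(h_n)$ to be a circle, so that $h_n$ is a genuine strong inversion of $K_n$ in $S^3$. By the resolution of the Smith conjecture, a nontrivial orientation-preserving involution of $S^3$ with nonempty fixed set is conjugate to a rotation about an unknotted circle. Since the fixed set meets $K_n$, it is nonempty, and it is one-dimensional: it consists of the arc pieces in $E$ and the two arcs in $V$. Hence $K_n$ is strongly invertible for every $n\ge 0$.

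The step I expect to be the main obstacle is verifying in the figure that $c$ itself meets the axis in two points and is invariant. That is what makes the symmetry compatible with twisting along $c$: twisting $K_0$ along a disk bounded by $c$ then commutes with the rotation. A concrete alternative for that step is to note that the disk bounded by $c$ can be chosen $h$-invariant, so the $-1/n$ twist is an $h$-equivariant homeomorphism of the complement of $c$.
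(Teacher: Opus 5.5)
Your proposal is correct and follows the same route as the paper: use the strongly invertible position of $K_0\cup c$ in Figure~\ref{fig:str}, where both components meet the axis in two points, and observe that the involution survives the $(-1/n)$-surgery on $c$. The paper leaves the equivariance of the surgery implicit, and you justify it correctly, either by extending the hyperelliptic involution over the filling solid torus or by twisting along an invariant disk bounded by $c$.
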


\begin{proof}
Figure \ref{fig:str} shows a strongly invertible position of the link $K_0\cup c$.
The dotted line is the axis of the involution.
Performing $(-1/n)$-surgery on $c$ changes $K_0$ into $K_n$.
This observation confirms that $K_n$ is strongly invertible for any $n\ge 0$.
\end{proof}

\begin{lemma}\label{lem:kirby}
For the link $K_0\cup c$, 
$(8,0)$-surgery yields a lens space $L(9,7)$.
\end{lemma}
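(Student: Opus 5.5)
This is a Kirby calculus computation on the two-component link $K_0\cup c$ of Figure \ref{fig:str}, with framing $8$ on $K_0$ and $0$ on $c$. I will simplify the diagram until a single unknot carries a rational coefficient; its surgery is then a lens space.

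The first step is to use the $0$-framed component $c$. From Figure \ref{fig:str}, $c$ is an unknot, and I expect it to bound a disk meeting $K_0$ in a small number of points (two or three) with algebraic intersection number $\ell$. Since $-1/n$ surgery on $c$ adds $n$ full twists and turns $K_0$ into $K_n$, the generators $(1,2)^{3n}$ in the braid word show that these are full twists on three strands. So the disk meets $K_0$ three times, all coherently, and $\ell=3$. A $0$-framed unknot cannot be blown down directly. Instead I will do a Rolfsen twist along $c$, changing its coefficient $0$ to $1/(0+k)$ for some integer $k$. Choosing $k=\pm1$ makes $c$ a $\pm1$-framed unknot, which can then be blown down.

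The cost is that $K_0$ acquires $\mp1$ full twist on the strands passing through $c$. The framing of $K_0$ changes from $8$ to $8 \mp k\ell^2 = 8\mp 9$. With $\ell=3$, the choice $k=1$ gives $-1$ and $k=-1$ gives $17$. One twist changes $K_0=T(3,4)$, as $[3,2,2,1,3,2,2,3,2]$, into the closure with $(1,2)^{\pm 3}$ appended. By the remark after Theorem \ref{thm:main2}, the $+$ direction gives $K_1$, the $(2,9)$-cable of $T(2,3)$. The $-$ direction should cancel the three-strand twist and give a much simpler knot. I expect it to be a torus knot or the unknot, after destabilization and braid relations.

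Next I identify the resulting surgery. If the twist produces the unknot with framing $-1$, the result would be $S^3$, so that cannot be the right branch. More plausibly the twist yields a torus knot $T(p,q)$ with framing $pq\pm1$, and Moser's theorem then gives a lens space $L(9,\cdot)$. Consistency checks: $H_1$ of the original surgery has order $|\det\begin{pmatrix}8&3\\3&0\end{pmatrix}|=9$, which matches. In the other branch, $K_1$ is the $(2,9)$-cable of $T(2,3)$ and the cabling slope is $18$; a surgery on that cable at slope $pq\pm1 = 18\pm1$ gives a lens space by Gordon's cable surgery theorem. Framing $17$ indeed equals $2\cdot 9-1$, and $17$-surgery on the $(2,9)$-cable of $T(2,3)$ equals $17/4$-surgery on $T(2,3)$. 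That is the lens space $L(17,\cdot)$, not order $9$. So I will recheck the framing bookkeeping: the linking number is $\ell$, and after the Rolfsen twist the framing change is $-k\ell^2$ with the resulting $c$ coefficient $1/k$. I will carry this out carefully. Alternatively, I will first slide to rewrite the $(8,0)$ presentation as a chain of unknots and blow down directly, reading $L(9,7)$ off the continued fraction.

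The main obstacle is the concrete diagram manipulation. Specifically, I need to identify exactly how $c$ encircles the braid and what knot $K_0$ becomes after the twist, and then pin down the exact lens space parameter $7$ (as opposed to $L(9,2)$, $L(9,4)$, etc.) via orientation conventions. I would first try to make the plan concrete by isotoping Figure \ref{fig:str} so that $c$ is a braid axis for part of $K_0$. Then I will verify the final identification $L(9,7)$ by computing the linking form, or by checking against a SnapPy computation.
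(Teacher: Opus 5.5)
There is a genuine gap: your key step, using a Rolfsen twist to turn $c$ into a $\pm1$-framed unknot and blow it down, cannot work. A $k$-fold Rolfsen twist along an unknot with coefficient $p/q$ replaces it by $p/(q+kp)$, i.e.\ $1/r\mapsto 1/r+k$. So a component with coefficient $0$ keeps coefficient $0$ for every $k$; it never becomes $\pm1$-framed. Your formula $1/(0+k)$ is the rule for a component with coefficient $\infty$.

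The twist along $c$ only shows that $(8,0)$-surgery on $K_0\cup c$ equals $(8+9k,0)$-surgery on $K_k\cup c_k$ for every $k$, with $c_k$ still $0$-framed. This is consistent with the linking matrix having determinant $-9$ for all $k$. What you actually computed, $S^3_{17}(K_1)$ and $S^3_{-1}(K_{-1})$, are the $(8,-1)$- and $(8,1)$-surgeries on $K_0\cup c$. That is why you got $|H_1|=17$ (and would get $1$ in the other branch) instead of $9$. This is not a bookkeeping slip, so re-checking framings will not repair it. Separately, $17$-surgery on the $(2,9)$-cable of $T(2,3)$ is $17/4$-surgery on the trefoil. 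By Gordon's theorem this is a Seifert fibered space with exceptional fibers of orders $2,3,7$, not a lens space.

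Once this step is gone, your proposal has no mechanism for eliminating the $0$-framed component. The fallback you name, ``rewrite the presentation as a chain of unknots and blow down,'' is the entire content of the lemma, and you leave it unspecified. The paper's proof is exactly such an explicit Kirby--Rolfsen sequence (Figure 2).

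To remove $c$ you need moves that genuinely use its $0$-framing. Examples are handle slides of $K_0$ over $c$, or blow-ups that unknot $K_0$ followed by twists along the auxiliary circles, until you reach a linear chain whose continued fraction gives $9/7=2-1/(2-1/(2-1/3))$ up to orientation conventions. Alternatively, apply the Montesinos trick to the strong inversion of Figure 1 and identify the branch set as the two-bridge knot whose double branched cover is $L(9,7)$.

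Your planned check of the parameter via the linking form also cannot succeed. $L(9,2)$, $L(9,4)$ and $L(9,7)$ are already homeomorphic up to orientation. The only other lens space with $H_1\cong\mathbb{Z}/9$ is $L(9,1)$, and its linking form is isomorphic to that of $L(9,7)$ because $7\equiv 4^2 \pmod 9$. The parameter has to be read off the explicit diagrammatic reduction, not from homological data.
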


\begin{proof}
This is confirmed by Kirby--Rolfsen calculus.
See Figure \ref{fig:kirby}.
\end{proof}

\begin{figure}[ht]
\includegraphics*[width=\textwidth]{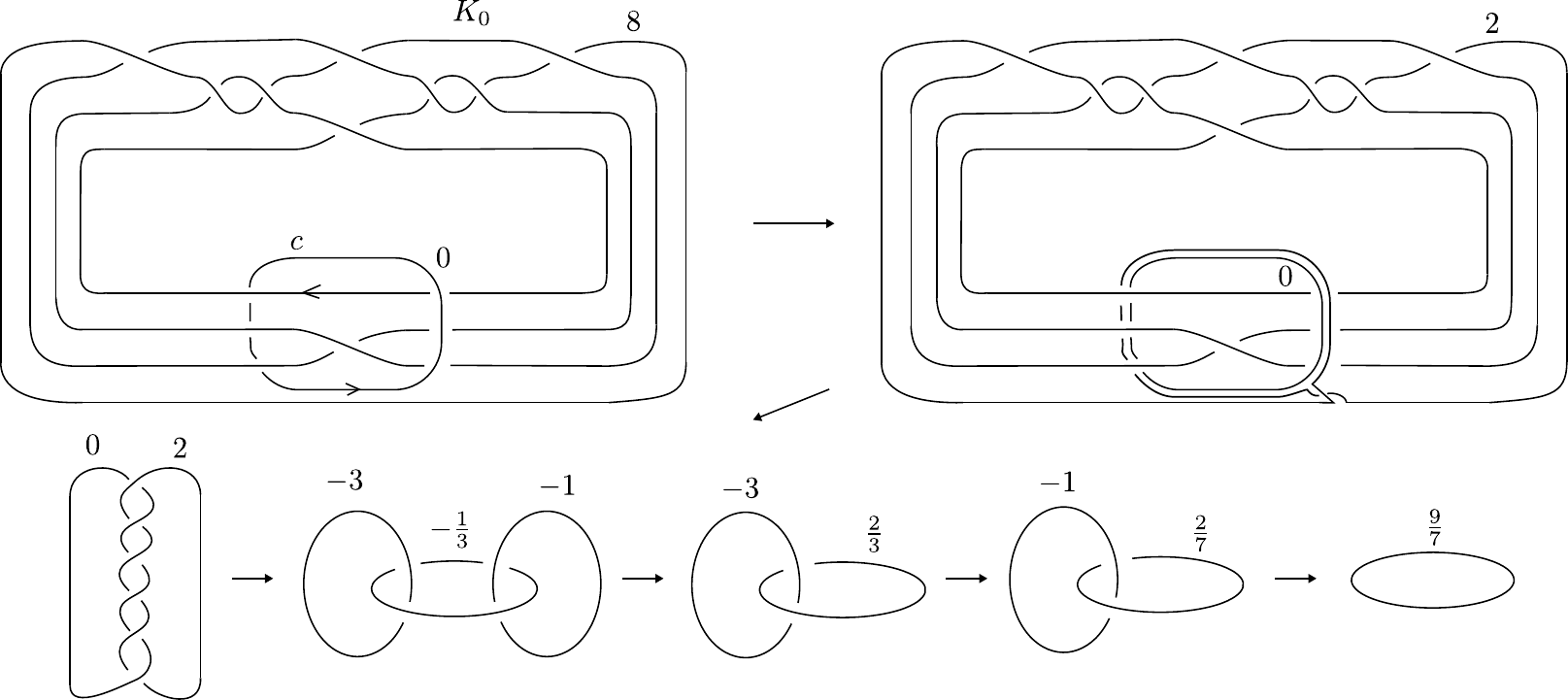}
\caption{Kirby--Rolfsen calculus for $(8,0)$-surgery on $K_0\cup c$.
The result is a lens space $L(9,7)$.}
\label{fig:kirby}
\end{figure}

\begin{lemma}[{\cite[Theorem 1.13 and Lemma 6.1]{BM}, \cite[Lemma 2.1]{BT}}]\label{lem:Lspace-interval}
Let $K\cup c$ be a link of an L--space knot $K$ and an unknot $c$ with linking number $w=\mathrm{lk}(K,c)>1$.
Let $K_n$ be the image of $K$ after $(-1/n)$-surgery on $c$ for integers $n$.
If there exists a slope $r\ge 2g(K)-1$ such that $(r,0)$-surgery on $K\cup c$ is an L--space, then $K_n$ is an L--space knot
for all integers $n\ge 0$.
\end{lemma}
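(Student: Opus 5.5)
We would argue via Dehn fillings of a single manifold and the surgery exact triangle, by induction on $n$. Let $X$ be the exterior of $c$ in $S^3_r(K)$, i.e.\ the result of $r$-surgery on $K$ in the exterior of $K\cup c$. Parametrize slopes on $\partial X$ by the standard meridian--longitude pair of $c$ in $S^3$, and write $X(\alpha)$ for the filling along $\alpha$. Three fillings matter:
\begin{itemize}
\item[(a)] $X(\infty)=S^3_r(K)$;
\item[(b)] $X(0)$ is the $(r,0)$-surgery on $K\cup c$;
\item[(c)] $X(-1/n)$ is the $(r,-1/n)$-surgery on $K\cup c$.
\end{itemize}
A Rolfsen twist along $c$ identifies $X(-1/n)$ with $S^3_{r+nw^2}(K_n)$, since the framing of $K$ changes by $nw^2$ under the twist.

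Next we record orders of first homology. From the linking matrix $\begin{pmatrix} r & w\\ w & 0\end{pmatrix}$ we get $|H_1(X(0))|=w^2$. Clearing denominators in $\begin{pmatrix} r & w\\ w & -1/n\end{pmatrix}$ gives $|H_1(X(-1/n))|=r+nw^2$ for $n\ge 0$; here we use $r\ge 2g(K)-1$, and in the relevant situations $r>0$, so no cancellation of signs occurs. In particular $|H_1(X(\infty))|=r$, the case $n=0$.

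The slopes $0$, $-1/n$, $-1/(n+1)$ have pairwise distance one, since the determinants of the pairs $(0,1)$, $(-1,n)$, $(-1,n+1)$ are all $\pm1$. Their homology orders satisfy
\[
|H_1(X(-1/(n+1)))| = |H_1(X(0))| + |H_1(X(-1/n))|, \quad\text{i.e.}\quad r+(n+1)w^2 = w^2 + (r+nw^2).
\]
Ozsv\'ath--Szab\'o's surgery exact triangle for this triad of slopes gives the standard consequence: if the two summand fillings are L--spaces and the orders add, then the third filling is an L--space. Indeed, $\operatorname{rank}\widehat{HF}$ of the third is at most the sum of the other two ranks, which equals its order of $H_1$.

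We now induct on $n$:
\begin{itemize}
\item[Base.] For $n=0$, $X(\infty)=S^3_r(K)$ is an L--space, because $K$ is an L--space knot and $r\ge 2g(K)-1$.
\item[Hypothesis.] $X(0)$ is an L--space.
\item[Step.] The triangle with slopes $0$, $-1/n$, $-1/(n+1)$ then shows that $X(-1/n)$ is an L--space for all $n\ge 0$.
\end{itemize}
Hence $S^3_{r+nw^2}(K_n)$ is an L--space with positive surgery slope $r+nw^2>0$, so $K_n$ is an L--space knot.

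The main obstacle is the bookkeeping in the exact triangle. We must check that the three slopes appear in the correct cyclic order, with the correct orientation conventions, so that the triangle has $X(-1/(n+1))$ as the ``sum'' term rather than one of the summands. We must also verify the signs in the homology computation, which is where positivity of $r$ enters (the case $g(K)=0$ with $r\le 0$ needs separate care, or is excluded in applications). The hypothesis $w>1$ is not needed for this L--space argument itself; it only guarantees nondegeneracy of the family (e.g.\ that the $K_n$ are genuinely changing). We would double-check that it plays no other role, consulting \cite{BM} if the triangle orientation requires it.
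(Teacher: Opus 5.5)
Your argument is correct and is essentially the standard proof behind the cited result (the surgery-triangle induction of Baker--Motegi, Lemma 6.1, which the paper quotes without reproving): apply the exact triangle to the triad $0,\,-1/n,\,-1/(n+1)$ on the exterior of $c$ in $S^3_r(K)$, use the order identity $r+(n+1)w^2=w^2+(r+nw^2)$, and induct starting from the L--space $S^3_r(K)$. The cyclic-order worry you flag is unnecessary, since exactness bounds the rank at every vertex of the triangle by the sum of the other two. Likewise $r>0$ holds automatically unless $K$ is the unknot.
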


\begin{lemma}\label{lem:Lspace}
For each $n\ge 0$, $K_n$ is an L--space knot.
\end{lemma}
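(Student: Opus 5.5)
The plan is to apply Lemma \ref{lem:Lspace-interval} to the link $K_0\cup c$ of Figure \ref{fig:str}, taking $K=K_0$, $r=8$, and the surgery coefficient $-1/n$ on $c$.

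First, I check the hypotheses on $K_0$. By the introduction, $K_0$ is the torus knot $T(3,4)$, which is a positive torus knot and hence an L--space knot. Its genus is $g(K_0)=(3-1)(4-1)/2=3$. This also agrees with the formula $3n+3$ at $n=0$: the braid has $9$ crossings on $4$ strands, so $2g-1=9-4=5$, giving $g=3$. Thus $2g(K_0)-1=5$, and the slope $r=8$ satisfies $r\ge 2g(K_0)-1$.

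Second, Lemma \ref{lem:kirby} says that $(8,0)$-surgery on $K_0\cup c$ yields the lens space $L(9,7)$. Lens spaces are L--spaces, so the surgery hypothesis of Lemma \ref{lem:Lspace-interval} holds.

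Third, I verify that $c$ is an unknot with $w=\mathrm{lk}(K_0,c)>1$. This is where the figure has to be read. The braid word for $K_n$ differs from that of $K_0$ by the factor $(\sigma_1\sigma_2)^{3n}=((\sigma_1\sigma_2)^3)^n$. Since $(\sigma_1\sigma_2)^3$ is the full twist on the first three strands, $-1/n$-surgery on $c$ inserts $n$ full twists on the strands that $c$ encircles. So $c$ is an unknotted circle enclosing three strands of the braid, and $w=3>1$. In particular $c$ is an unknot in $S^3$. One should also confirm from Figure \ref{fig:str} that $-1/n$-surgery on $c$ realizes exactly these twists with the sign giving positive crossings, as the paper asserts.

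With all hypotheses in place, Lemma \ref{lem:Lspace-interval} gives that $K_n$ is an L--space knot for every $n\ge 0$. The only step that is not mechanical is the linking number and unknottedness of $c$, which must be read off the figure. The full-twist description of the braid makes $w=3$ clear.
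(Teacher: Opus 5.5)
Your proposal is correct and matches the paper's proof: apply Lemma~\ref{lem:Lspace-interval} to $K_0\cup c$ with $K_0=T(3,4)$ of genus three, slope $r=8\ge 2g(K_0)-1=5$, and the lens space $L(9,7)$ from Lemma~\ref{lem:kirby}. Your explicit check that $\mathrm{lk}(K_0,c)=3>1$ via the full twist $(\sigma_1\sigma_2)^3$ is only implicit in the paper's proof (the paper states the linking number in Section~\ref{sec:hyp}), so it is a reasonable addition.
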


\begin{proof}
First, $K_0$ is a torus knot $T(3,4)$ of genus three.
By Lemma \ref{lem:kirby}, $(8,0)$-surgery on $K_0\cup c$ 
yields an L--space, because a lens space is an L--space.
Thus our link $K_0\cup c$ satisfies the assumption of Lemma \ref{lem:Lspace-interval}.
\end{proof}

\section{Tunnel number}\label{sec:t2}

In this section, we prove that $K_n$ has tunnel number two if $n\ge 3$.
We need a lemma using the Montesinos trick \cite{M}.

\begin{lemma}\label{lem:graph}
If $n\ge 3$, then 
$(9n+9)$-surgery on $K_n$ yields a graph manifold $M$ which
is the union of two Seifert fibered spaces over the disk with two exceptional fibers
of indices $(3,3)$ and $(2,n-1)$, respectively.
In addition, the graph manifold $M$  has Heegaard genus at least three.
\end{lemma}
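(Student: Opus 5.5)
The plan has two parts: first identify the surgered manifold via the Montesinos trick, then bound its Heegaard genus from below.

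\textbf{Identifying $M$.} By Lemma \ref{lem:strong}, $K_n$ is strongly invertible, and the involution visible in Figure \ref{fig:str} extends over the surgery solid torus. So $(9n+9)$-surgery on $K_n$ is the double branched cover of $S^3$ branched over a link $L_n$. To find $L_n$, I will:
\begin{itemize}
\item take the quotient of the exterior of $K_n$ (a ball containing a tangle, as in \cite{M});
\item fill it with the rational tangle corresponding to the slope $9n+9$.
\end{itemize}
The twisting $(1,2)^{3n}$ should appear in the quotient as a twist region whose number of crossings grows linearly in $n$. I expect that isotoping $L_n$ will exhibit it as the union of two Montesinos tangles glued along a Conway sphere $S$:
\begin{itemize}
\item one side is a Montesinos tangle with two rational tangles of denominators $3$ and $3$;
\item the other side has two rational tangles of denominators $2$ and $n-1$.
\end{itemize}
The slope $9n+9 = 4g(K_n)+\dots$ is presumably chosen so that the rational tangles coalesce in this way. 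Taking double branched covers, each Montesinos tangle with two rational tangles lifts to a Seifert fibered space over the disk with two exceptional fibers of the stated indices. The Conway sphere $S$ lifts to an incompressible torus $T$. To confirm that $M$ is a genuine graph manifold rather than Seifert fibered, I will check that the fibers on the two sides do not match along $T$, that is, that $L_n$ is not itself a Montesinos link. This can be done by comparing the slopes on $S$ induced by the two tangles. The condition $n\ge 3$ makes $n-1\ge 2$, so the second fiber is genuinely exceptional. The tangle isotopy is the laborious step, and I would do it by pictures, checking against the census cases $n=3,4$.

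\textbf{Heegaard genus at least three.} Suppose instead that $M$ had a genus two Heegaard splitting. Since $M$ contains an essential torus, I will invoke the classification of genus two manifolds containing incompressible tori, due to Kobayashi: a genus two graph manifold with such a JSJ decomposition must have a specific form. If the two pieces are Seifert fibered over the disk with two exceptional fibers, then one piece must admit a genus two splitting compatible with a ``tunnel'' through the torus. Concretely, one piece must be the exterior of a $(p,q)$ torus knot in a lens space or in $S^3$, or a piece with an exceptional fiber of index $2$ glued in a particular way.

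Alternatively, I will use Boileau--Zieschang / Weidmann-type rank bounds, namely $g(M)\ge \operatorname{rank}\pi_1(M)$. I will try to show that $\pi_1(M)$ has rank three by mapping onto a suitable group. For example, killing the regular fibers of both pieces gives a quotient onto a free product $\mathbb{Z}/3*\mathbb{Z}/3*\dots$ or onto a triangle-type group whose rank is at least $3$. More concretely, I would find an epimorphism of $\pi_1(M)$ onto $(\mathbb{Z}/3*\mathbb{Z}/3)$ amalgamated with $(\mathbb{Z}/2*\mathbb{Z}/(n-1))$ appropriately, and apply Grushko. Since this alone might give only rank $2$, I expect to rely instead on Kobayashi's list: in the genus two case, at least one side must contain an exceptional fiber of index $2$ paired with a degree-one gluing condition. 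I would check that the $(3,3)$ side violates the required form, because neither of its exceptional fibers has index $2$.

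\textbf{Main obstacle.} This exclusion of genus two, matching $M$ against Kobayashi's list using the gluing data from the Montesinos picture, is the step I expect to be the main obstacle.
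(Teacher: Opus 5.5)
The architecture (Montesinos trick, then Kobayashi's classification of genus-two manifolds containing an essential torus) is the paper's. However, your decisive step, ruling out genus two, rests on a criterion that is false, so the argument does not go through.

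Kobayashi's theorem does not force an exceptional fiber of index $2$ on either side. Here is a counterexample. Since $37=3\cdot 12+1$, the $(3,37)$-cable $C$ of $T(3,4)$ can be isotoped to meet each solid torus of the standard genus-one splitting of $S^3$ in a single trivial arc. So $C$ is a $(1,1)$-knot and has tunnel number one. Its $108$-surgery is $E(T(3,4))\cup X$, where $X$ (the filled cable space) is Seifert fibered over the disk with exceptional fibers of indices $3$ and $|108-3\cdot 37|=3$. This manifold has Heegaard genus at most two and contains an essential torus. Yet no index-$2$ fiber appears, and one piece is of type $(3,3)$ exactly as in $M$, so your proposed check would ``prove'' its genus is at least three.

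Your other two routes also fail. The condition ``one piece is a torus knot exterior in a lens space'' rules out nothing, since every Seifert space over the disk with two exceptional fibers is one. The rank route is, as you concede, not carried through.

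What is needed is the correct form of Kobayashi's result, as used in \cite[Subsection 2.3]{ABG}. Suppose a graph manifold is obtained by gluing two Seifert fibered spaces along an incompressible torus $T$ and contains no other essential torus. If it has genus two, then the regular fibers of the two pieces meet \emph{exactly once} on $T$. In the cable example above, the fiber slopes $12$ and $37/3$ on $\partial N(T(3,4))$ do meet once.

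So you must compute the intersection number of the fibers, not merely check that it is nonzero; nonzero only shows $M$ is not Seifert fibered. The paper reads this off the final Conway-sphere picture. The fiber of each Montesinos tangle is a lift of the circle separating its two rational tangles, and these two circles meet in four points. Hence the fibers meet twice on $T$ (cf.\ the proof of \cite[Proposition 12.28]{BZ}).

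You also need ``the regular fiber'' of each piece to be well defined. This holds for $n\ge 4$. For $n=3$, however, the $(2,2)$-piece is the twisted $I$-bundle over the Klein bottle, which also fibers over the M\"obius band and falls under a different case of Kobayashi's list. Your remark that $n\ge 3$ just makes the second fiber exceptional misses this. The paper treats $n=3,4$ separately by citing \cite{ABG}.

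Finally, your plan leaves the identification of $M$ as an expectation. The paper makes it uniform in $n$ by realizing $(9n+9)$-surgery on $K_n$ as $(9,-1/n)$-surgery on the strongly invertible link $K_0\cup c$, so the quotient contains a single box of $n$ half twists. Note also that $9n+9=3g(K_n)$, not $4g(K_n)+\cdots$.
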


\begin{proof}
For the link $K_0\cup c$ given in Figure \ref{fig:str}, consider $(9,-1/n)$-surgery.
Performing $n$-twists on $c$ shows that it represents $(9n+9)$-surgery on $K_n$.

Take the quotient under the involution along the axis depicted there.
Then we have a diagram as shown in Figure \ref{fig:mont} (top left)  after the tangle replacements.
(Since $K_0$ has writhe $9$ in the diagram of Figure \ref{fig:str},
the tangle surgery for the quotient of $K_0$ is done by the $0$-tangle.)

After a series of deformation in Figures \ref{fig:mont} and \ref{fig:mont1},
the resulting link (right in Figure \ref{fig:mont1}) consists of
two Montesinos tangles $[-1/3,1/3]$ and $[-1/2,-1/(n-1)]$.
By taking the double branched cover, which gives $M$, 
we have the first conclusion.

To confirm that this graph manifold $M$ has Heegaard genus at least three, 
we follow the argument in \cite[Subsection 2.3]{ABG},
which is based on the classification of closed orientable $3$-manifolds with Heegaard genus two that
contain an essential torus by Kobayashi \cite{Ko}.
It claims that if a graph manifold obtained from gluing two Seifert fibered manifolds along an incompressible boundary torus
has Heegaard genus two and contains a unique essential torus, then the regular fibers of the two pieces on the common torus intersect just once.

First, the cases where $n=3$ and $4$ are treated in \cite[2.3.2]{ABG}.
Thus we assume $n\ge 5$.
We remark that  each piece of $M$  admits the unique Seifert fibration.
(In fact, this holds when $n\ge 4$.)
From Figure \ref{fig:mont1}, we see that the regular fibers of the two pieces on the common
torus intersect twice (see the proof of \cite[Proposition 12.28]{BZ}).
This implies that $M$ has Heegaard genus at least three. \end{proof}

\begin{figure}[ht]
\includegraphics*[width=\textwidth]{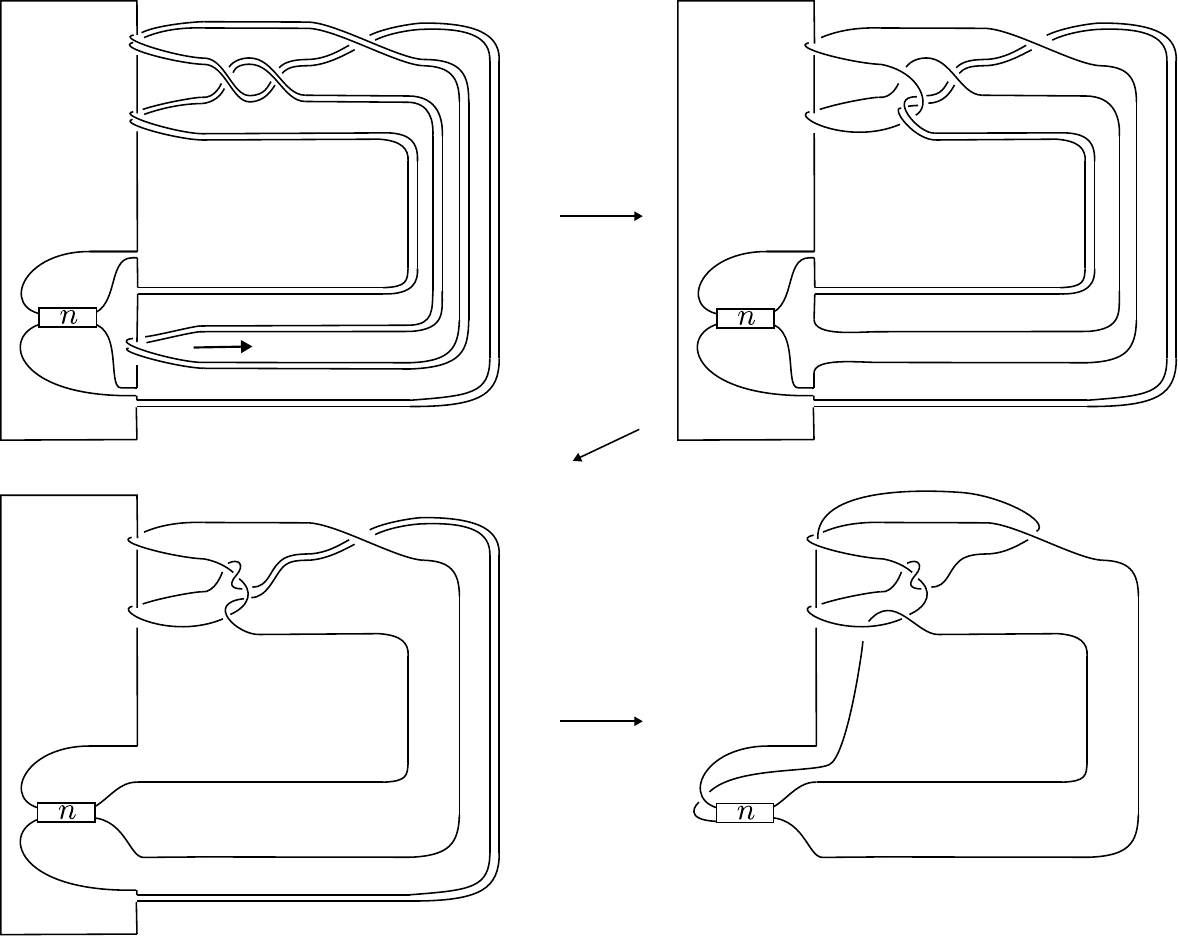}
\caption{ The diagram after the tangle replacements (top left) and its deformation.
The box with integer $n$ contains right handed horizontal $n$ half twists.
}
\label{fig:mont}
\end{figure}

\begin{figure}[ht]
\includegraphics*[width=\textwidth]{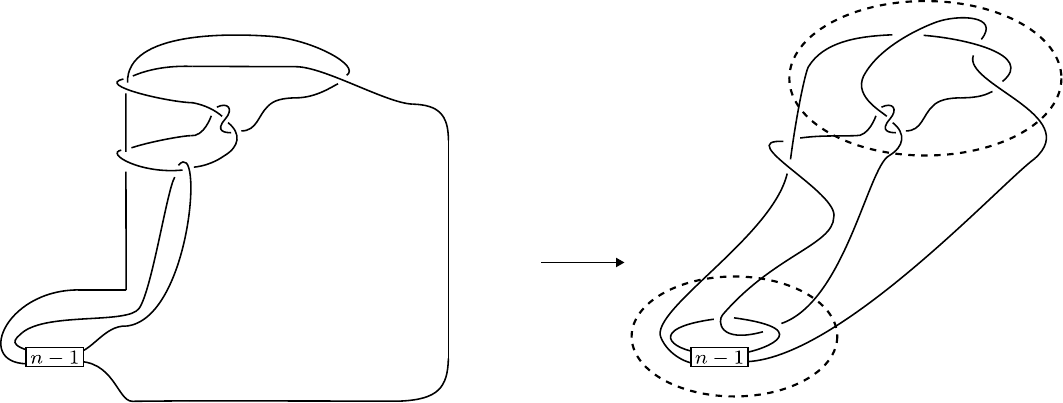}
\caption{Further deformation.
The resulting link (right) consists of two Montesinos tangles $[-1/3,1/3]$ and $[-1/(n-1), -1/2]$.}
\label{fig:mont1}
\end{figure}

\begin{lemma}\label{lem:t2}
If $n\ge 3$, then $K_n$ has tunnel number two.
\end{lemma}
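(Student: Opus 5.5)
The proof splits into an upper bound and a lower bound on the tunnel number.

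\emph{Upper bound.} I will exhibit an explicit unknotting tunnel system of two arcs for $K_n$. Write $K_n$ as the closure of the positive $4$-braid $[3,2,2,1,3,2,2,3,2,(1,2)^{3n}]$. The block $(1,2)^{3n}$ involves only the first three strands, so it can be absorbed into a genus-two handlebody neighborhood in a uniform way. Concretely, I take one short arc joining two strands near the braid axis, and a second arc joining the fourth strand to the rest in the region where the generators $\sigma_3$ occur. These should be chosen so that $N(K_n\cup\tau_1\cup\tau_2)$ is a genus-three handlebody whose exterior is also a handlebody. To check this, I will slide the arcs through the twisting region; since the $(1,2)^{3n}$ part is a full-twist power on three strands, which commutes with everything on those strands, the complement of the resulting graph can be isotoped to a standard one independent of $n$. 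Alternatively, I can use the surgery description on $K_0\cup c$ from Figure~\ref{fig:str}. Choose two tunnels for $K_0$ such that $c$ is isotopic into the boundary of the resulting genus-three handlebody's exterior as a core-like curve. Then $(-1/n)$-surgery on $c$ preserves the Heegaard splitting of the exterior, and this gives tunnel number at most two for every $n$ at once. I expect to follow this second route, since it handles all $n$ uniformly.

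\emph{Lower bound.} I argue by contradiction. Suppose $K_n$ had tunnel number one. Then its exterior would have a genus-two Heegaard splitting, and every Dehn filling of it would have Heegaard genus at most two. But Lemma~\ref{lem:graph} says that for $n\ge3$ the $(9n+9)$-surgery $M$ on $K_n$ has Heegaard genus at least three. So $K_n$ has tunnel number at least two. Moreover, $K_n$ is not trivial (it has genus $3n+3$), so its tunnel number is not zero either. Combining this with the upper bound gives exactly two.

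\emph{Main obstacle.} The main difficulty is the upper bound: finding two arcs that actually work and verifying that the exterior is a handlebody. I would first try to locate the tunnels in Figure~\ref{fig:str} so that $c$ bounds a disk meeting the tunnel system once, or lies on the Heegaard surface with a framing compatible with twisting. If that fails, I would check small cases, for example with SnapPy for $n=3,4$, to guess the pattern and then generalize it by the twisting argument.
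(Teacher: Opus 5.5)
The lower bound is the paper's argument exactly. A genus-two splitting of $E(K_n)$ would give every filling Heegaard genus at most two, which contradicts Lemma~\ref{lem:graph}.

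\textbf{The gap is the upper bound $t(K_n)\le 2$.} The paper proves it by exhibiting two specific arcs (Figure~\ref{fig:tunnel}) whose complement is a genus-three handlebody. Your proposal only says where arcs might go and how you would search for them, so no tunnel system is actually produced or checked.

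Two of your guiding heuristics also need repair. First, since $\mathrm{lk}(c,K_0)=3$, every disk bounded by $c$ meets $K_0$ at least three times, so it cannot meet the tunnel graph only once. Second, ``core-like'' must be made precise. Twisting preserves a handlebody exterior if either of the following holds:
\begin{itemize}
\item $c$ is primitive in that handlebody;
\item $c$ lies on the Heegaard surface with surface framing equal to its Seifert framing, so that $-1/n$ surgery is a Dehn-twist regluing.
\end{itemize}

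\textbf{Your uniform route does go through with a concrete choice.}
\begin{enumerate}
\item Let $D$ be the twisting disk of $c$, meeting strands $1,2,3$ next to the block $(1,2)^{3n}$. Let $t_1,t_2\subset D$ join strands $1,2$ and strands $2,3$.
\item Then $D\cap N(K_0\cup t_1\cup t_2)$ is a single meridian disk. The twist along $D$ therefore extends over $N(K_0\cup t_1\cup t_2)$, and $E(K_n\cup t_1\cup t_2)$ is the same for all $n\ge 0$.
\item Absorb $N(t_1\cup t_2)$ and the closing arcs of strands $1,2,3$ into one ball. The exterior is then the complement of the $3$-string tangle obtained from $[3,2,2,1,3,2,2,3,2]$ by closing only the fourth strand.
\item Reading Wirtinger generators down the braid gives, for one crossing convention,
$\langle x_1,x_2,x_3,x_4\mid x_4=ux_3u^{-1}\rangle$ with $u=x_1x_2x_3x_4x_3^{-1}x_2^{-1}$.
\item Change to the basis $v=x_1x_2x_3$, $g=x_2x_3$, $x_3$, $x_4$, so that $u=vx_4g^{-1}$. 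The relation becomes $x_3=g\,x_4^{-1}v^{-1}x_4vx_4\,g^{-1}$, which simply eliminates $x_3$.
\item Hence the group is free of rank three. The exterior is irreducible, so it is a genus-three handlebody.
\end{enumerate}
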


\begin{proof}
First, the two arcs $t_1$ and $t_2$, as shown in Figure \ref{fig:tunnel},  give an unknotting tunnel system for $K_n$.
It is straightforward to verify that the exterior of the neighborhood of $K_n\cup t_1\cup t_2$ is a handlebody of genus three.
Hence $K_n$ has tunnel number at most two.

\begin{figure}[ht]
\includegraphics*[width=0.7\textwidth]{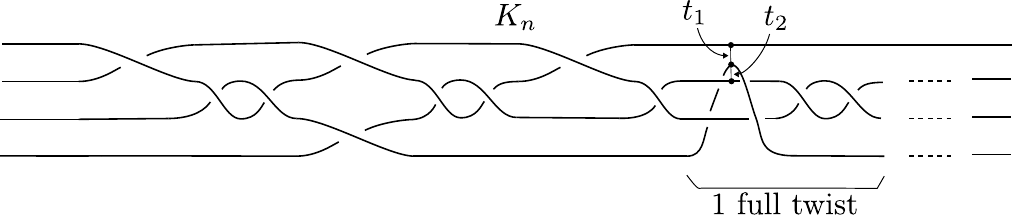}
\caption{An unknotting tunnel system $\{t_1,t_2\}$ for $K_n$.}
\label{fig:tunnel}
\end{figure}

If $K_n$ has tunnel number one, then any resulting manifold by Dehn surgery on $K_n$ has Heegaard genus at most two.
This is impossible by Lemma \ref{lem:graph}.
Hence $K_n$ has tunnel number two.
\end{proof}

\begin{remark}
It is well known that a torus knot has tunnel number one, so is $K_0$. 
For $K_2$, which is \texttt{s682}, it is not hard to see that it admits an unknotting tunnel.
In fact, $t_1$, shown in Figure \ref{fig:tunnel},  gives the unknotting tunnel for $K_2$.
However, $K_1$, the $(2,9)$-cable of $T(2,3)$, has tunnel number two \cite{Bl,MS}.
\end{remark}

\section{Hyperbolicity and braid index}\label{sec:hyp}

In this section, we prove that $K_n$ is hyperbolic when $n\ge 2$.
First, we calculate the Alexander polynomial $\Delta_{K_n}(t)$ of $K_n$.

\begin{lemma}\label{lem:alex}
Let $n\ge 1$.
Then the Alexander polynomial $\Delta_{K_n}(t)$ of $K_n$ is given as
\[
\Delta_{K_n}(t)=(t^{6n+6}-t^{6n+5})+\sum_{i=0}^{n-1}(t^{3n+3i+5}-t^{3n+3i+4}) + t^{3n+3}-\sum_{i=0}^{n-1}(t^{3i+5}-t^{3i+4})-t+1.
\]
\end{lemma}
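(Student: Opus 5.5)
We compute $\Delta_{K_n}(t)$ using the surgery description $K_n = $ image of $K_0$ under $(-1/n)$-surgery on $c$, from Figure \ref{fig:str}. The plan is to use the two-variable Alexander polynomial of the link $K_0\cup c$ and the Torres-type surgery formula for twisting. Let $w=\mathrm{lk}(K_0,c)$. From the braid description, $c$ encircles the strands on which the twists $(1,2)^{3n}$ act: $(\sigma_1\sigma_2)^3$ is a full twist on three strands, so $w=3$. Twisting $n$ times along $c$ adds $3n$ full twists there, consistent with the genus growing by $3n$. The standard formula for the image of $K$ after $(-1/n)$-surgery on an unknot $c$ with linking number $w$ is then
\[
\Delta_{K_n}(t)\doteq \frac{t-1}{t^{w}-1}\,\Delta_{K_0\cup c}(t,t^{nw})\cdot(\text{correction}),
\]
more precisely $\Delta_{K_n}(t)(t^w-1)\doteq (t-1)\Delta_{K_0\cup c}(t,t^{wn})$, valid since the exterior of $K_n$ is obtained from the link exterior by Dehn filling.

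The key steps, in order:
\begin{enumerate}
\item Compute $\Delta_{K_0\cup c}(x,y)$, for instance via Fox calculus from a Wirtinger presentation, or via a Seifert-matrix or $C$-complex computation read off from Figure \ref{fig:str}.
\item Substitute $x=t$, $y=t^{3n}$.
\item Divide by $(t^3-1)/(t-1)=t^2+t+1$.
\item Normalize so that the result is symmetric with $\Delta(1)=1$.
\end{enumerate}

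An alternative, more self-contained route avoids Figure \ref{fig:str} altogether. Since $K_n$ is a closed positive braid, I would compute the reduced Burau representation of $\beta_n=[3,2,2,1,3,2,2,3,2]\cdot(\sigma_1\sigma_2)^{3n}$. Then use
\[
\Delta_{K_n}(t)=\frac{1-t}{1-t^4}\det\bigl(I-\bar\rho(\beta_n)\bigr).
\]
The virtue here is that $(\sigma_1\sigma_2)^3$ is central in $B_3\subset B_4$, and its Burau image restricted to the span of the first three strands acts by the scalar $t^3$ on a two-dimensional block. Concretely, $\bar\rho((\sigma_1\sigma_2)^{3})$ is a $3\times 3$ matrix $A$ whose eigenvalues are $t^3, t^3, 1$. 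So $A^n$ has the closed form $A^n=t^{3n}P+Q$ with fixed matrices $P$ and $Q$ (a spectral decomposition, after checking diagonalizability). The determinant $\det(I-B(t^{3n}P+Q))$, with $B$ the image of the fixed prefix, is then a polynomial of degree at most two in $u=t^{3n}$ with coefficients in $\mathbb{Z}[t^{\pm1}]$. Computing these three coefficients once gives $\Delta_{K_n}$ for all $n$ simultaneously.

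Finally, I would match the result with the stated formula. The stated sums are geometric series; for example,
\[
\sum_{i=0}^{n-1}(t^{3i+5}-t^{3i+4})=t^4(t-1)\frac{t^{3n}-1}{t^3-1},
\]
so the claimed $\Delta_{K_n}$ is of the form $(\alpha(t)u^2+\beta(t)u+\gamma(t))/(t^2+t+1)$. Comparing $\alpha,\beta,\gamma$ with the three computed coefficients finishes the proof. As sanity checks:
\begin{itemize}
\item At $n=1$, the formula gives the Alexander polynomial of the $(2,9)$-cable of $T(2,3)$, namely $\Delta_{T(2,3)}(t^2)\Delta_{T(2,9)}(t)$.
\item The degree is $6n+6=2g(K_n)$, as required for a fibered knot.
\end{itemize}

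The main obstacle is the explicit $3\times3$ Burau determinant with the fixed prefix, a finite but error-prone computation. I would organize it by first conjugating so that $A$ is in block form, and verify the outcome against the $n=1$ and $n=2$ cases (\texttt{s682}, using its known polynomial).
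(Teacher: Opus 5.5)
Your first route is the paper's own proof. The paper takes $\Delta_{K_0\cup c}(x,y)=(x^8-x^5+x^4)y^2+(x^5-x^4+x^3)y+x^4-x^3+1$ from software. It justifies $y\mapsto t^{3n}$ through the homeomorphism of the exteriors of $K_0\cup c$ and $K_n\cup c_n$, under which $\mu_c=\mu_{c_n}+3n\mu_{K_n}$, and then divides by $t^2+t+1$ using the Torres condition with linking number $3$. The resulting numerator is your $\alpha u^2+\beta u+\gamma$, with $\alpha=t^8-t^5+t^4$, $\beta=t^5-t^4+t^3$, $\gamma=t^4-t^3+1$. One slip: twisting $n$ times along $c$ inserts $n$ full twists on three strands ($6n$ crossings), not $3n$ full twists. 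That is why the genus grows by $3n$.

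Your Burau alternative is a genuinely different route, and it works more cleanly than you expect. Use the usual matrices
\[
\bar\rho(\sigma_1)=\begin{pmatrix}-t&1&0\\0&1&0\\0&0&1\end{pmatrix},\quad
\bar\rho(\sigma_2)=\begin{pmatrix}1&0&0\\t&-t&1\\0&0&1\end{pmatrix},\quad
\bar\rho(\sigma_3)=\begin{pmatrix}1&0&0\\0&1&0\\0&t&-t\end{pmatrix}.
\]
Then $A=\bar\rho\bigl((\sigma_1\sigma_2)^3\bigr)=\begin{pmatrix}t^3&0&1-t\\0&t^3&1-t^2\\0&0&1\end{pmatrix}$ is already triangular and satisfies $(A-t^3I)(A-I)=0$, so no conjugation is needed. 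With $u=t^{3n}$ and $s=t^2+t+1$,
\[
A^n=\begin{pmatrix}u&0&(1-u)/s\\0&u&(1+t)(1-u)/s\\0&0&1\end{pmatrix}.
\]
Let $B=\bar\rho(\sigma_3\sigma_2^2\sigma_1\sigma_3\sigma_2^2\sigma_3\sigma_2)$. A direct expansion gives $s\det(I-BA^n)=(1+t+t^2+t^3)(\alpha u^2+\beta u+\gamma)$ as an identity in $t$ and $u$. Hence $\frac{1-t}{1-t^4}\det(I-BA^n)$ is exactly the stated polynomial, and there is no $n$-dependent unit to track.

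The paper's route needs three things: a two-variable Alexander polynomial computed by machine from the figure, homological bookkeeping for the twist direction, and the Torres condition. In exchange, it reuses the link $K_0\cup c$ that also drives the L--space and tunnel-number arguments. Your route uses only the braid word and is a hand-checkable $3\times 3$ computation, independent of the surgery picture.
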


\begin{proof}
We start from the link $K_0\cup c$ used in Section \ref{sec:Lspace}.
The orientation of each component is shown in Figure \ref{fig:str}.
The multivariable Alexander polynomial $\Delta_{K_0\cup c}(x,y)$
is given as
\[
(x^8-x^5+x^4)y^2+(x^5-x^4+x^3)y+x^4-x^3+1.
\]
(We used Kodama's Knot \cite{K} for the calculation.)

By performing $(-1/n)$-surgery on $c$, the link $K_0\cup c$ is changed into the new link $K_n\cup c_n$,
where $c_n$ is the unknot.  We set $c=c_0$.
Clearly, two links $K_0\cup c$ and $K_n\cup c_n$ have homeomorphic exteriors.
Then the induced isomorphism of the homeomorphism on their homology groups
relates the Alexander polynomials of these two links (\cite{F,Mo2}).

Let $\mu_{K_n}$ and $\lambda_{K_n}$ ($\mu_{c_n}$ and $\lambda_{c_n}$, resp.) be the oriented meridian and longitude of $K_n$ ($c_n$, resp.).
Then $\mu_{K_n}=\mu_{K_0}$, $\mu_{c_n}=\mu_{c}-n\lambda_{c}$ and $\lambda_{c}=3\mu_{K_0}$.
These give $\mu_{c}=\mu_{c_n}+3n\mu_{K_n}$.

Hence we have
\[
\Delta_{K_n\cup c_n}(x,y)=\Delta_{K_0\cup c}(x,yx^{3n}).
\]
Since $\mathrm{lk}(K_n,c_n)=\mathrm{lk}(K_0,c)=3$,
the Torres condition \cite{T} implies
\[
\Delta_{K_n\cup c_n}(x,1)=\frac{x^3-1}{x-1} \Delta_{K_n}(x).
\]
Thus,
\begin{align*}
\Delta_{K_n}(t)&=\frac{t-1}{t^3-1} \Delta_{K_n\cup c_n}(t,1)=\frac{t-1}{t^3-1} \Delta_{K_0\cup c}(t,t^{3n})\\
&=\frac{t-1}{t^3-1}(t^{6n+8}-t^{6n+5}+t^{6n+4}+t^{3n+5}-t^{3n+4}+t^{3n+3}+t^4-t^3+1).
\end{align*}
Here,
\[
t^{6n+8}-t^{6n+5}=t^{6n+5}(t^3-1), \quad t^{6n+4}-t^{3n+4}=t^{3n+4}(t^{3n}-1),
\]
and
\begin{align*}
t^{3n+5}+t^{3n+3}+t^4&=(t^{3n+5}+t^{3n+4}+t^{3n+3})-t^{3n+4}+t^4\\
&=t^{3n+3}(t^2+t+1)-t^4(t^{3n}-1).
\end{align*}
Therefore we have
\begin{align*}
\Delta_{K_n}(t)&=t^{6n+5}(t-1)+t^{3n+4}(t-1)\sum_{i=0}^{n-1}t^{3i}+t^{3n+3}-t^4(t-1)\sum_{i=0}^{n-1}t^{3i}-t+1\\
&=(t^{6n+6}-t^{6n+5})+\sum_{i=0}^{n-1}(t^{3n+3i+5}-t^{3n+3i+4})+t^{3n+3}-\sum_{i=0}^{n-1}(t^{3i+5}-t^{3i+4})\\
& \quad -t+1
\end{align*}
as desired.
\end{proof}

Recall the \textit{formal semigroup\/} $\mathcal{S}$ of an L--space knot $K$ \cite{W}.
It is a set of non-negative integers defined from the formal power series expansion
\[
\frac{\Delta_K(t) }{1-t} = \sum_{s\in \mathcal{S} } t^s \in \mathbb{Z}[[t]].
\]

\begin{lemma}\label{lem:formal}
Let $n\ge 1$.
The formal semigroup of $K_n$ is given as
\begin{align*}
\mathcal{S}&=\{0,4,7,11,\dots, 3n-2,3n+1\}\cup\{ 3n+3\}\cup \{3n+5,3n+6,\dots, 6n-1,6n\}\\
& \quad \cup \{6n+2,6n+3,6n+4\}\cup \{6n+6,6n+7,\dots\}.
\end{align*}
In particular, $\mathcal{S}$ is not a semigroup if $n\ge 2$.
\end{lemma}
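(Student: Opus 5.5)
The plan is to read $\mathcal{S}$ directly off the explicit form of $\Delta_{K_n}(t)$ in Lemma \ref{lem:alex}, using the standard structure of Alexander polynomials of L--space knots. Since $K_n$ is an L--space knot (Lemma \ref{lem:Lspace}), its nonzero coefficients are $\pm1$ and alternate in sign. Write
\[
\Delta_{K_n}(t)=\sum_{k=0}^{2m}(-1)^k t^{a_k},\qquad 0=a_0<a_1<\dots<a_{2m}=6n+6 .
\]
Because $1/(1-t)=\sum_{j\ge 0}t^j$, each term $t^{a_{2j}}-t^{a_{2j+1}}$ contributes $t^{a_{2j}}+\dots+t^{a_{2j+1}-1}$, and the top term $t^{a_{2m}}$ contributes $t^{a_{2m}}+t^{a_{2m}+1}+\cdots$. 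Hence
\[
\mathcal{S}=[a_0,a_1)\cup[a_2,a_3)\cup\dots\cup[a_{2m-2},a_{2m-1})\cup[a_{2m},\infty),
\]
intersected with $\mathbb{Z}$. So everything reduces to listing the exponents of $\Delta_{K_n}$ in increasing order with their signs.

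From Lemma \ref{lem:alex}, reading from the bottom degree upward, the terms are as follows.
\begin{itemize}
\item The constant term $+1$, then $-t$.
\item The negated first sum, $+t^{3i+4}-t^{3i+5}$ for $i=0,\dots,n-1$, i.e. $+t^4,-t^5,+t^7,-t^8,\dots,+t^{3n+1},-t^{3n+2}$.
\item The term $+t^{3n+3}$.
\item The second sum, $-t^{3n+3i+4}+t^{3n+3i+5}$ for $i=0,\dots,n-1$, running up to $+t^{6n+2}$.
\item Finally $-t^{6n+5}$ and $+t^{6n+6}$.
\end{itemize}
I would first check that no two exponents coincide and that the signs genuinely alternate. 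This is the one point needing care: at the junctions $3n+2<3n+3<3n+4$ and $6n+2<6n+5<6n+6$ consecutive exponents must have opposite signs, and they do.

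Pairing each positive exponent with the next negative one then produces the blocks of $\mathcal{S}$ in the order listed in the statement.
\begin{itemize}
\item The intervals $[0,1)$ and $[3i+4,3i+5)$ give $\{0,4,7,\dots,3n+1\}$.
\item The interval $[3n+3,3n+4)$ gives $\{3n+3\}$.
\item The consecutive pairs $+t^{3n+3i+5}$ and $-t^{3n+3i+7}$ give the runs starting at $3n+5,3n+6$ and ending at $6n-1,6n$. This is the block written $\{3n+5,3n+6,\dots,6n-1,6n\}$.
\item The interval $[6n+2,6n+5)$ gives $\{6n+2,6n+3,6n+4\}$.
\item The tail $[6n+6,\infty)$.
\end{itemize}
The main obstacle is purely bookkeeping: keeping the index ranges of the two sums straight at the boundary cases $i=0$ and $i=n-1$. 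I would sanity-check the result against $n=1$ and $n=2$.

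For the last assertion, note that $4\in\mathcal{S}$ but $8=4+4\notin\mathcal{S}$ when $n\ge2$. Indeed, $8\le 3n+2$, and $8\equiv 2\pmod 3$, so $8$ is not of the form $3i+4$. Nor is $8$ equal to $0$ or $3n+3$, and it lies below every later block. Hence $\mathcal{S}$ is not closed under addition.
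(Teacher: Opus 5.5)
Your proposal is correct and is the paper's argument made explicit: you read $\mathcal{S}$ off the formula for $\Delta_{K_n}(t)$ in Lemma~\ref{lem:alex} by expanding $\Delta_{K_n}(t)/(1-t)$ block by block, and you get non-closure for $n\ge 2$ from $4\in\mathcal{S}$, $8\notin\mathcal{S}$. Your bookkeeping is slightly more accurate than the displayed statement: the first block consists of the numbers $3i+4$ (so the ``$11$'' there should read $10$), and the middle block is the union of the pairs $\{3n+3i+5,\,3n+3i+6\}$ for $0\le i\le n-2$, which for $n\ge 3$ omits the points $3n+3i+4$ ($1\le i\le n-2$) rather than filling the whole interval from $3n+5$ to $6n$.
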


\begin{proof}
The first conclusion immediately follows from Lemma \ref{lem:alex}.
If $n\ge 2$, then $4\in \mathcal{S}$ but $8\not\in \mathcal{S}$.
Hence $\mathcal{S}$ is not closed under addition.
\end{proof}

We remark that $\mathcal{S}=\{0,4,6,8,9,10\}\cup \{12,13,\dots\}$ when $n=1$.
This is closed under addition (see \cite[Example 1.10]{BoL} and \cite[Theorem 1.4]{W}).

\begin{lemma}\label{lem:hyp}
If $n\ge 2$, then $K_n$ is hyperbolic.
\end{lemma}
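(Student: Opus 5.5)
We use Thurston's trichotomy: $K_n$ is a torus knot, a satellite knot, or hyperbolic. We will exclude the first two using the L--space property (Lemma \ref{lem:Lspace}) and the formal semigroup computed in Lemma \ref{lem:formal}.

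\emph{Torus knots.} The Alexander polynomial of an L--space knot determines its formal semigroup. For a torus knot $T(p,q)$, the formal semigroup is the genuine semigroup generated by $p$ and $q$, so it is closed under addition. By Lemma \ref{lem:formal}, for $n\ge 2$ the set $\mathcal{S}$ is not closed under addition, since $4\in\mathcal{S}$ but $8\notin\mathcal{S}$. Hence $K_n$ is not a torus knot.

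\emph{Satellite knots.} Suppose $K_n$ is a satellite. By the result of Baker--Motegi (also Hom and Hanselman--Rasmussen--Watson), a satellite L--space knot is a cable: $K_n = C_{p,q}(J)$, where $J$ is an L--space knot and $q\ge p(2g(J)-1)$. In that situation the formal semigroup satisfies
\[
\mathcal{S}_{K_n} = p\,\mathcal{S}_J + q\,\mathbb{Z}_{\ge 0}.
\]
By induction on the satellite structure, every iterated cable of torus knots has a formal semigroup that is an honest semigroup. More precisely, Wang's theorem says that if $\mathcal{S}_J$ is closed under addition, so is $\mathcal{S}_K$, and the companion $J$ is either a torus knot or, by Thurston again, hyperbolic or a further cable.

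The difficulty is that $J$ could be a hyperbolic L--space knot whose formal semigroup is not a semigroup. So I would argue directly from the formula above instead. We have $\mathcal{S}_{K_n} \supseteq p\,\mathcal{S}_J$. The smallest positive element $4$ forces $p \mid 4$ or $q=4$.
\begin{itemize}
\item If $q=4$, then $q\ge p(2g(J)-1)\ge p$, so $p\in\{2,3\}$ and $g(J)\le 2$. In this case the genus $g(K_n) = p\,g(J) + (p-1)(q-1)/2$ is far too small compared with $3n+3$ once $n\ge2$.
\item If $p\in\{2,4\}$, the braid index bound ($K_n$ is a closed $4$-braid, and the braid index of a cable is $p$ times that of the companion) forces either $p=2$ with $J$ of braid index $2$, or $p=4$ with $J$ trivial. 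So $J$ is a torus knot, $\mathcal{S}_{K_n}$ is an honest semigroup, and this contradicts Lemma \ref{lem:formal}.
\end{itemize}
Thus $K_n$ is not a satellite, and Thurston's theorem gives that it is hyperbolic.

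The main obstacle is the cable case: rigorously pinning down $p$ and ruling out a non-torus companion. The cleanest route is the braid-index multiplicativity for cables (Williams / Birman--Menasco). It uses that the braid index of $K_n$ is at most $4$, which holds because $K_n$ is a closed $4$-braid, so it does not depend on part (2) of Theorem \ref{thm:main2}. This forces the companion to be a torus knot of braid index at most $2$, hence a $(2,m)$-torus knot. We then get a contradiction with non-closure of $\mathcal{S}$ under addition, using Wang's cabling formula.
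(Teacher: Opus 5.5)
Your torus-knot step is fine and agrees with the paper. The satellite step, however, rests on a statement that is not a theorem: it is not known that a satellite L--space knot must be a cable, and in general this is false. Hom--Lidman--Vafaee showed that a satellite is again an L--space knot when its pattern is a sufficiently positively twisted Berge--Gabai knot in the solid torus and its companion is an L--space knot. Berge--Gabai patterns include $1$-bridge braids that are not cable patterns. Hom's theorem only characterizes which \emph{cables} are L--space knots.

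The Baker--Motegi results used in the paper give something weaker: the companion $J$ is an L--space knot, and the pattern is \emph{braided} in the solid torus with some number $w\ge 2$ of strands. So the following are all applied to a situation you have not reached: Wang's cabling formula $\mathcal{S}_{K_n}=p\,\mathcal{S}_J+q\,\mathbb{Z}_{\ge 0}$, your case split on the smallest element $4$, and braid-index multiplicativity for cables. As written, the satellite case is not excluded.

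There is also a separate slip in the $q=4$ branch: the bound $g(J)\le 2$ is too weak. With $p=3$ and $g(J)=2$ the genus would be $9=3n+3$ at $n=2$. Coprimality forces $p=3$, and then $4\ge 3(2g(J)-1)$ gives $g(J)=1$ and genus $6$.

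The missing idea is a reduction from ``braided pattern'' to ``$2$-cable of a $2$-strand companion'' that uses the $4$-braid structure of $K_n$. The paper does this with bridge index. $K_n$ is prime, since L--space knots are prime (Krcatovich), and a closed $4$-braid has bridge index at most $4$. Schubert's inequality $b(K_n)\ge w\cdot b(J)$ with $w\ge 2$ and $b(J)\ge 2$ then forces wrapping number $2$ and $J$ $2$-bridge. A braided pattern with two strands is a $2$-cable. A $2$-bridge L--space knot is a torus knot $T(2,2k+1)$ by Ozsv\'ath--Szab\'o. So $K_n$ is an iterated torus L--space knot, and Wang's theorem contradicts Lemma~\ref{lem:formal}.

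Your braid-index idea can be repaired the same way, but it needs multiplicativity of braid index for satellites with arbitrary braided patterns (generalized cables). That is a stronger input than the cable case you quote. With it, $4\ge w\cdot\mathrm{br}(J)$ gives $w=2$ and $\mathrm{br}(J)=2$, so $J=T(2,m)$ and the pattern is a $2$-cable. The case analysis on the smallest element of $\mathcal{S}$ then becomes unnecessary.
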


\begin{proof}
By Lemma \ref{lem:formal}, the formal semigroup $\mathcal{S}$ of $K_n$ is not  a semigroup.
Hence $K_n$ is not a torus knot, since the formal semigroup of a torus knot is a genuine semigroup (\cite[Example 1.10]{BoL}).

Suppose that $K_n$ is a satellite knot for a contradiction.
First, $K_n$ is prime, because an L--space knot is prime \cite[Theorem 1.2]{Kr} and Lemma \ref{lem:Lspace}.
Since $K_n$ is the closure of a $4$-braid, it has bridge index at most four.
Hence the companion is a $2$-bridge knot, and the pattern has wrapping number two by \cite{Sc}.

In addition, the companion is an L--space knot, and the pattern is braided \cite[Theorems 1.17 and 7.2]{BM}.
This implies that the companion is a $2$-bridge torus knot  \cite[Theorem 1.5]{OS}, and the pattern is $2$-cabled.
Then $K_n$ is an iterated torus L--space knot, so its formal semigroup is closed under addition \cite[Theorem 1.4]{W}.
This contradicts Lemma \ref{lem:formal}.
Hence $K_n$ is hyperbolic.
\end{proof}


\begin{lemma}\label{lem:braid}
If $n\ge 1$, then 
$K_n$ has braid index four.
\end{lemma}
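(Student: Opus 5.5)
The upper bound is immediate: $K_n$ is the closure of a $4$-braid, so its braid index is at most four. For the lower bound I would use the Morton--Franks--Williams inequality. Since $K_n$ is the closure of a \emph{positive} braid, I would rather use the standard consequence for positive braids. If a knot is the closure of a positive braid on $s$ strands with $c$ crossings, then the minimal $v$-degree of the HOMFLY polynomial equals $c-s+1$, which is $2g$. If moreover the braid index is $b<s$, then MFW forces the $v$-span to be at most $2(b-1)$.

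Concretely, I would take the maximal $v$-degree to be $c-s+1+2(s-1)$, attained exactly when the braid contains a full twist $\Delta^2$. This is Franks--Williams' theorem: a positive braid containing a full twist has braid index equal to its number of strands. So the first plan is to show the braid word $[3,2,2,1,3,2,2,3,2,(1,2)^{3n}]$ contains a positive full twist $\Delta_4^2$ after braid relations and conjugation, when $n\ge 1$.

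Since $(1,2)^3=\Delta_3^2$ on the first three strands, the word for $n\ge 1$ contains $\Delta_3^2$. I would then try to absorb the prefix $[3,2,2,1,3,2,2,3,2]$ together with $\Delta_3^2$ into $\Delta_4^2=(\sigma_1\sigma_2\sigma_3)^4$. The identity to use is $\Delta_4^2=\Delta_3^2\,\sigma_3\sigma_2\sigma_1\sigma_1\sigma_2\sigma_3$. So it would suffice to show that the prefix, up to conjugation and using that $\Delta_3^2$ is central in $B_3$, contains $\sigma_3\sigma_2\sigma_1^2\sigma_2\sigma_3$ as a subword modulo positive relations. That prefix has only one $\sigma_1$, but $\Delta_3^2$ supplies extra $\sigma_1$'s. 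I would manipulate $[3,2,2,1,3,2,2,3,2]\cdot(\sigma_1\sigma_2)^3$ directly using $\sigma_1\sigma_2\sigma_1=\sigma_2\sigma_1\sigma_2$ and $\sigma_2\sigma_3\sigma_2=\sigma_3\sigma_2\sigma_3$, aiming to exhibit a positive word of the form $\Delta_4^2\cdot P$.

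The main obstacle is that this rewriting may fail, since the crossing count is small. There are $9+6=15$ crossings, while $\Delta_4^2$ needs $12$. If so, I would fall back to the Franks--Williams criterion for the case $n=1$ and the genus/degree count, or compute the HOMFLY polynomial. As a further fallback, I would use the Alexander polynomial from Lemma~\ref{lem:alex}: if $K_n$ had braid index $3$, then by \cite{LV}, being an L--space knot (Lemma~\ref{lem:Lspace}), it would be a torus knot or a braid-positive twisted torus knot. For $n\ge 2$ it is hyperbolic (Lemma~\ref{lem:hyp}), so it is not a torus knot. Then I would compare the formal semigroup of Lemma~\ref{lem:formal}, which is not a semigroup, with that of a $3$-braid L--space twisted torus knot, or compare genus and Alexander polynomials. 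For $n=1$, the cable $K_1$ is the $(2,9)$-cable of $T(2,3)$ and has braid index $2\cdot 2=4$ by the known braid index of cables. Braid index two is excluded since those knots are torus knots.
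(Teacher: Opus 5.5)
Your primary route does not work. Positive braid relations preserve, for each pair of strands, the number of crossings between them, and in $\Delta_4^2$ every pair crosses exactly twice. In $[3,2,2,1,3,2,2,3,2,(1,2)^{3n}]$ the strands starting in positions $2$ and $3$ never cross. So no positive word equal to this braid contains $\Delta_4^2$, or even $\Delta_4$. Centrality does not rescue this: $\Delta_3^2$ is central in $B_3$, but it does not commute with the three $\sigma_3$'s in the prefix.

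Conjugation can shift crossings between pairs, but you give no mechanism for this. For $n=1$ the route is closed outright. Every positive $4$-braid representative of $K_1$ has $2g+3=15$ crossings. A $15$-crossing positive $4$-braid knot containing $\Delta_4^2$ is $\Delta_4^2$ times a word using each of $\sigma_1,\sigma_2,\sigma_3$ once, and its closure is $T(4,5)\neq K_1$.

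Also, MFW sharpness does not occur ``exactly when'' a full twist is present: the closure of $\sigma_1^3\sigma_2^3$ is sharp without containing one. You only use the ``if'' direction, so this is minor.

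Everything therefore rests on your fallbacks, and for $n\ge 2$ the decisive step is missing. The Lee--Vafaee route is legitimate and genuinely different from the paper's. The paper uses bridge index for $n=1$, a SageMath HOMFLY-PT computation for $n=2$, and for $n\ge 3$ the fact that L--space knots of braid index at most three have tunnel number one, contradicting Lemma~\ref{lem:t2}.

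However, you never say what separates $K_n$ from a braid index three L--space knot. The property you single out, that $\mathcal S$ is not a semigroup, does not do it. For instance the closure of $(\sigma_1\sigma_2)^5\sigma_1^2=T(3,5;2,1)$ has $\mathcal S=\{0,3,5,7,8,10,11,\dots\}$, which is not closed under addition either.

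What works is the least positive element of $\mathcal S$. The braid index three L--space knots $(\sigma_1\sigma_2)^q\sigma_1^{2s}$ are conjugate to $\Delta_3^{2k}\sigma_1^m\sigma_2$ with $k\ge 1$ and $m$ odd. The reduced Burau representation, with $\Delta_3^2\mapsto t^3I$, gives
\[
\frac{\Delta(t)}{1-t}=\frac{1+t^{3k}w_m(t)+t^{6k+m+1}}{1-t^3},\qquad w_1=t,\quad w_m=t^2-t^3+\cdots+t^{m-1}\ (m\ge 3),
\]
so $3\in\mathcal S$. The L--space knots of braid index at most two (the unknot and $T(2,m)$) have $1$ or $2$ in $\mathcal S$. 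By Lemma~\ref{lem:formal}, $\mathcal S(K_n)\cap\{1,2,3\}=\emptyset$ for every $n\ge 1$.

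With this step supplied, your approach gives a uniform proof for all $n\ge1$. It needs neither the tunnel number argument nor the computer calculation for $K_2$. Without it, the cases $n\ge 2$ remain unproved.
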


\begin{proof}
We know that $K_1$ is the $(2,9)$-cable of the trefoil, so it has bridge index four.
Thus $K_1$ has braid index four.

Next, consider $K_2$, which  is \texttt{s682}.
The Morton--Franks--Williams inequality (\cite{FW,Mo}) gives a lower bound on the braid index in terms of its HOMFLY-PT polynomial.
For the $P(v,z)$ version of the HOMFLY-PT polynomial,
let $d_+$ and $d_-$ be the max and min degrees of $v$.
Then $(d_+-d_-)/2+1$ gives the lower bound for the braid index.
For $K_2$, $d_+=24$ and $d_-=18$ by SageMath \cite{Sa}.
This shows that $K_2$ has braid index four.

Assume $n\ge 3$.
If $K_n$ has braid index at most three, then it has tunnel number one as mentioned in Section \ref{sec:intro}.
By Lemma \ref{lem:t2}, $K_n$ has tunnel number two.
Hence it has braid index four.
\end{proof}


\begin{proof}[Proof of Theorem \ref{thm:main2}]
This follows from Lemmas \ref{lem:strong}, \ref{lem:Lspace}, \ref{lem:t2}, \ref{lem:hyp} and \ref{lem:braid}.
\end{proof}



\bibliographystyle{amsplain}

\end{document}